\newtheorem{theorem}{Theorem}[section]
\newtheorem{lemma}[theorem]{Lemma}
\theoremstyle{definition}
\theoremstyle{remark}
\numberwithin{equation}{section}
\let \e=\varepsilon
\let \a=\alpha
\let \f=\varphi
\let \b=\beta
\let \O=\Omega
\let \ga=\gamma
\begin{document}

\title[The John-Nirenberg inequality]
{The John-Nirenberg inequality with sharp constants}

\author{Andrei K. Lerner}
\address{Department of Mathematics,
Bar-Ilan University, 5290002 Ramat Gan, Israel}
\email{aklerner@netvision.net.il}

\begin{abstract} 
We consider the one-dimensional John-Nirenberg inequality: 
$$
|\{x\in I_0:|f(x)-f_{I_0}|>\a\}|\le C_1|I_0|\exp\Big(-\frac{C_2}{\|f\|_{*}}\a\Big).
$$
A. Korenovskii found that the sharp $C_2$ here is $C_2=2/e$. It is shown in this paper
that if $C_2=2/e$, then the best possible $C_1$ is $C_1=~\frac{1}{2}e^{4/e}$.   
\end{abstract}

\keywords{$BMO$, John-Nirenberg inequality, sharp constants.}

\subjclass[2010]{42A05,42B35}

\maketitle

\section{Introduction}
Let $I_0\subset {\mathbb R}$ be an interval and let $f\in L(I_0)$. Given a subinterval $I\subset I_0$, set 
$f_I=\frac{1}{|I|}\int_If$ and $\O(f;I)=\frac{1}{|I|}\int_I|f(x)-f_I|dx.$

We say that $f\in BMO(I_0)$ if $\displaystyle\|f\|_{*}\equiv\sup_{I\subset I_0}\O(f;I)<\infty$. The classical John-Nirenberg 
inequality \cite{JN} says that there are $C_1,C_2>0$ such that for any $f\in BMO(I_0)$, 
$$
|\{x\in I_0:|f(x)-f_{I_0}|>\a\}|\le C_1|I_0|\exp\Big(-\frac{C_2}{\|f\|_{*}}\a\Big)\quad(\a>0).
$$

A. Korenovskii \cite{k1} (see also \cite[p. 77]{k2}) found the best possible constant $C_2$ in this inequality,
namely, he showed that $C_2=2/e$:
\begin{equation}\label{jnr}
|\{x\in I_0:|f(x)-f_{I_0}|>\a\}|\le C_1|I_0|\exp\Big(-\frac{2/e}{\|f\|_{*}}\a\Big)\quad(\a>0),
\end{equation}
and in general the constant $2/e$ here cannot be increased. 

A question about the sharp $C_1$ in (\ref{jnr}) remained open. 
In \cite{k1}, (\ref{jnr}) was proved with $C_1=e^{1+2/e}=5.67323\dots$. The method of the proof in \cite{k1} was based on the Riesz sunrise lemma
and on the use of non-increasing rearrangements. In this paper we give a different proof of (\ref{jnr}) yielding the sharp constant $C_1=\frac{1}{2}e^{4/e}=2.17792\dots$. 
\begin{theorem}\label{mr} Inequality (\ref{jnr}) holds with $C_1=\frac{1}{2}e^{4/e}$, and this constant is best possible. 
\end{theorem}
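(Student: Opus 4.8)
The plan is to determine the optimal distributional constant and the extremal exponent. By the invariance of (\ref{jnr}) under $f\mapsto cf$, $f\mapsto f+c$ and $f\mapsto f(\la\,\cdot)$ we may assume $\|f\|_{*}=1$, $f_{I_0}=0$ and $|I_0|=1$; write $g=f$, so $g_{I_0}=0$ and $\O(g;I)\le1$ for every $I\subset I_0$. We must show
\[
\Phi(\a):=|\{x\in I_0:|g(x)|>\a\}|\le \tfrac12 e^{4/e}e^{-(2/e)\a}\qquad(\a>0).
\]
Since always $\Phi(\a)\le1$, the inequality is automatic where the right-hand side is $\ge1$, i.e.\ for $\a\le 2-\tfrac e2\ln2$; only larger $\a$ matter. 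Splitting $\{|g|>\a\}=\{g>\a\}\sqcup\{g<-\a\}$ and using $g\mapsto-g$, it suffices to estimate $P(\a):=\sup\,|\{x\in I:g(x)-g_I>\a\}|$, the supremum taken over all intervals $I$ of length $1$ and all $g$ with $\|g\|_{*,I}\le1$; then $\Phi(\a)\le 2P(\a)$.

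The core is an iteration built from the Riesz ``rising sun'' lemma together with one extra observation. Fix $g$ on $I_0$ as above, an integer $n\ge1$, and put $\la=\a/n$. The rising sun lemma applied to $g$ at level $\la$ (admissible since $g_{I_0}=0<\la$) yields disjoint open intervals $I^{(1)}_j\subset I_0$ with $g_{I^{(1)}_j}=\la$, with $g\le\la$ a.e.\ off $\bigcup_j I^{(1)}_j$, and with $\{g>\la\}\subset\bigcup_j I^{(1)}_j$ up to a null set. The extra observation is that, because $g_{I_0}=0$, one has $\int_{I_0}g^{+}=\tfrac12\int_{I_0}|g|\le\tfrac12$, so that
\[
\la\sum_j|I^{(1)}_j|=\int_{\bigcup_j I^{(1)}_j}g\le\int_{I_0}g^{+}\le\tfrac12,\qquad\text{i.e.}\qquad \sum_j|I^{(1)}_j|\le\tfrac1{2\la}
\]
— a factor $\tfrac1{2\la}$, not $\tfrac1\la$. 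On each $I^{(1)}_j$ the function $g-\la$ has mean $0$ and $BMO$ norm $\le1$, so the same step applied inside $I^{(1)}_j$ produces intervals $I^{(2)}_{j,k}$ with $g_{I^{(2)}_{j,k}}=2\la$, with $\{g>2\la\}\cap I^{(1)}_j\subset\bigcup_k I^{(2)}_{j,k}$, and with $\sum_k|I^{(2)}_{j,k}|\le\tfrac1{2\la}|I^{(1)}_j|$. Iterating $n$ times gives $\{g>\a\}\subset\bigcup I^{(n)}$ with $\sum|I^{(n)}|\le(2\la)^{-n}=(n/2\a)^{n}$, so $P(\a)\le(n/2\a)^{n}$ for every $n\ge1$, hence
\[
\Phi(\a)\le\min\Big(1,\ 2\min_{n\ge1}\big(\tfrac n{2\a}\big)^{n}\Big)\qquad(\a>0).
\]
It remains to check $\min\big(1,\,2\min_{n}(n/2\a)^{n}\big)\le\tfrac12 e^{4/e}e^{-(2/e)\a}$. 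With $u=2\a/e$ the ``$1$''-branch gives $e^{u}\le\tfrac12 e^{4/e}$ exactly for $u\le\tfrac4e-\ln2$, while $2(n/2\a)^{n}e^{(2/e)\a}=2\exp\!\big(n\ln(n/u)-n+u\big)$ has, as a function of real $n$, minimum $2$ at $n=u$; over integers the excess is largest where two consecutive choices of $n$ coincide, and a short calculation shows the overall maximum of the left-hand side is attained at $\a=2$ (that is $u=\tfrac4e$), where both $n=1$ and $n=2$ give precisely $\tfrac12 e^{4/e}e^{-(2/e)\a}$. This establishes (\ref{jnr}) with $C_1=\tfrac12 e^{4/e}$, the critical exponent being $\a=2$.

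For optimality one must produce $g$ with $\|g\|_{*}=1$, $g_{I_0}=0$, $|I_0|=1$ for which $\Phi(2)$ is arbitrarily close to $\tfrac12=\tfrac12 e^{4/e}e^{-4/e}$; by the analysis above this forces $C_1\ge\tfrac12 e^{4/e}$ and fixes the extremal exponent at $\a=2$. Equivalently, all the estimates of the preceding paragraph at $n=1$, $\la=2$ must be asymptotically sharp: $g\le0$ off the intervals carrying $\{g>2\}$ with $\int_{I_0}|g|\to1$, and $g$ close to $2$ on most of each of those intervals (and symmetrically on the ``negative'' side). The constraint $\|g\|_{*}=1$ forbids $g$ from jumping at the endpoints of these intervals and forces a \emph{self-similar, logarithmic} transition there; taking the $n$-th stage of this self-similar profile yields the required family $g_n$, for which one computes $\|g_n\|_{*}$ by locating the extremal subinterval (a symmetric one about a self-similarity point) and solving a scalar fixed-point equation for its oscillation, and then checks $\Phi_{g_n}(2)\to\tfrac12$.

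The crux is conceptual: finding the right reduction (to the one-sided $P$) and exploiting the ``mean-zero gain'' $\int g^{+}=\tfrac12\int|g|$ at \emph{every} stage of the iteration — this alone upgrades the naive exponent $1/e$ to the sharp $2/e$ — together with the realization that after telescoping the worst $\a$ is the non-integer ``number of levels'' $\a=2$, which is what produces the constant $e^{4/e}=e^{2\cdot 2/e}$ rather than a power of $e^{2/e}$. The remaining technical work is the extremal family: the fixed-point computation of $\|g_n\|_{*}$ and the evaluation of its distribution.
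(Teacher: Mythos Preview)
Your upper-bound argument is essentially the paper's: the Riesz sunrise lemma, the ``mean-zero gain'' $\int_I(g-g_I)^+=\tfrac12\int_I|g-g_I|$ at every stage, iteration, and optimization over the number of levels. The paper packages this as a pointwise inequality $|f-f_{I_0}|\le\tfrac{\|f\|_*}{2\ga}\sum_k\chi_{G_k}$ with $|G_k|\le\min(2\ga^k,1)|I_0|$ and then optimizes over the continuous parameter $\ga$, but choosing $\ga=n/(2\a)$ recovers exactly your bound $2(n/2\a)^n$. Your ``short calculation'' showing $\sup_\a 2\min_n(n/2\a)^n e^{(2/e)\a}=\tfrac12 e^{4/e}$ is not quite short: it is the paper's verification that $c_m=2\big(e^{(1+1/m)^m/e}/(1+1/m)^m\big)^{m+1}$ is decreasing in $m$, proved via a derivative computation. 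You should not leave this as an assertion.

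The sharpness part is a genuine gap. You sketch a ``self-similar, logarithmic'' family $g_n$ and promise a fixed-point computation of $\|g_n\|_*$, but you never write down the functions, never compute their $BMO$ norm, and never verify $|\{|g_n|>2\}|\to\tfrac12$. The paper does something far simpler and fully explicit: take $f=\chi_{[0,1/4]}-\chi_{[3/4,1]}$ on $I_0=[0,1]$. Then $f_{I_0}=0$, and a direct case analysis over subintervals (those containing only $1/4$, and those containing both $1/4$ and $3/4$) gives $\|f\|_*=\tfrac12$. Since $|\{|f|>\a\}|=\tfrac12$ for every $\a<1=2\|f\|_*$, inequality (\ref{jnr}) at $\a=2(1-\e)\|f\|_*$ forces $C_1\ge\tfrac12 e^{(4/e)(1-\e)}$ for every $\e>0$, hence $C_1\ge\tfrac12 e^{4/e}$. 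No self-similar family or fixed-point equation is needed; your intuition that the extremal must be logarithmic is natural (that is the picture for the sharp \emph{exponent}), but for the constant $C_1$ a single step function already saturates the bound in the limit $\a\to 2\|f\|_*$.
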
 

We also use as the main tool the Riesz sunrise lemma. But instead of the rearrangement inequalities, we obtain a direct pointwise estimate for any $BMO$-function (see
Theorem \ref{main} below). The proof of this result is inspired (and close in spirit) by a recent decomposition of an arbitrary measurable function in terms of mean oscillations (see \cite{H,L}). 

We mention several recent papers \cite{SV,VV} where sharp constants in some different John-Nirenberg type estimates were found by means of the Bellman function method.  

\section{Proof of Theorem \ref{mr}}
We shall use the following version of the Riesz sunrise lemma \cite{Klemes}. 

\begin{lemma}\label{riesz}
Let $g$ be an integrable function on some interval $I_0\subset
{\mathbb R}$, and suppose $g_{I_0}\le \a$.
Then there is at most countable family of pairwise disjoint
subintervals $I_j\subset I_0$ such that
$g_{I_j}=\a$, and
$g(x)\le \a$ for almost all $x\in I_0\setminus\left(\cup_{j}I_j \right)$.
\end{lemma}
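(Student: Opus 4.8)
The plan is to deduce the lemma from F.~Riesz's rising sun lemma for continuous functions, supplemented by a separate argument at the left endpoint of $I_0$ that uses the hypothesis $g_{I_0}\le\a$ in an essential way. Write $I_0=[a,b]$ and set $F(x)=\int_a^x(g(t)-\a)\,dt$; since $g$ is integrable, $F$ is absolutely continuous, with $F(a)=0$, $F(b)=|I_0|(g_{I_0}-\a)\le0$, and $F'(x)=g(x)-\a$ for a.e.\ $x$. The key object is the ``shadow'' set
$$
E=\{x\in(a,b): F(y)>F(x)\ \text{for some}\ y\in(x,b]\},
$$
which is open by continuity of $F$, so $E=\bigsqcup_k(a_k,b_k)$ with at most countably many components. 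If $x\in(a,b)\setminus E$ then $F(y)\le F(x)$ for all $y>x$, hence $F'(x)\le0$ wherever $F'$ exists, so $g\le\a$ a.e.\ on $(a,b)\setminus E$; dually, $\{x:g(x)>\a\}$ is contained in $E$ up to a null set. Hence it suffices to cover $E$ by non-overlapping subintervals of $I_0$ on each of which $g$ has mean $\a$: the complement of their union then lies in $(a,b)\setminus E$, up to the endpoints of $I_0$.

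Next I would analyze a single component $(a_k,b_k)$. For $x\in(a_k,b_k)$ let $z$ be the smallest maximizer of $F$ on $[x,b]$; since $x\in E$ we have $F(z)>F(x)$, so $z>x$; if $z<b_k$ then $z\in E$ produces a point of $[x,b]$ at which $F$ exceeds $\max_{[x,b]}F$, a contradiction; hence $z\ge b_k$, and using that $b_k\notin E$ when $b_k<b$ (while $z=b_k$ when $b_k=b$) we get $\max_{[x,b]}F\le F(b_k)$, so $F(x)<F(b_k)$. Letting $x\to a_k^{+}$ gives $F(a_k)\le F(b_k)$; if $a_k>a$ then $a_k\notin E$ forces the reverse inequality, so $F(a_k)=F(b_k)$ and hence $g_{[a_k,b_k]}=\a$. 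The only component for which this can fail is the one, if any, whose left endpoint equals $a$: there, letting $x\to a^{+}$ in the bound $\max_{[x,b]}F\le F(b_1)$ yields merely $F(b_1)=\max_{[a,b]}F\ge F(a)=0$, so the mean of $g$ over $[a,b_1]$ may be strictly larger than $\a$.

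To repair this I would set $t_1=\sup\{t\in[a,b]:F(t)\ge0\}$; since $a$ belongs to this set, $t_1\ge a$, and continuity of $F$ together with $F(b)\le0$ forces $F(t_1)=0$, so $g_{[a,t_1]}=\a$ (if $t_1=a$ this interval is degenerate and is simply discarded). The decisive point is that $t_1\notin E$, because $F(y)<0=F(t_1)$ for every $y\in(t_1,b]$ by maximality; consequently no component of $E$ contains $t_1$ in its interior, so each component satisfies either $b_k\le t_1$ (and then $(a_k,b_k)\subseteq[a,t_1]$) or $a_k\ge t_1$, and moreover if $t_1=a$ then $F<0$ on $(a,b]$ and no component can touch $a$. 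Thus the family
$$
\mathcal F=\{[a,t_1]\}\cup\{[a_k,b_k]:a_k\ge t_1\}
$$
consists of subintervals of $I_0$ with $g$-mean $\a$ on each, its union contains $E$, and its members overlap at most in common endpoints; replacing members that share an endpoint by their union (which again has $g$-mean $\a$), we may take $\mathcal F$ pairwise disjoint, and the lemma follows.

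The step I expect to be the main obstacle is exactly the behaviour at the left endpoint: the rising sun construction alone leaves one exceptional interval $[a,b_1]$ on which the mean of $g$ need not equal $\a$, and it is precisely the hypothesis $g_{I_0}\le\a$ (equivalently $F(b)\le0$), combined with the right choice of the stopping point $t_1$, that both corrects this interval and keeps the whole collection non-overlapping.
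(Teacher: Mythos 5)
Your proof is correct. Note that the paper offers no proof of this lemma at all --- it is imported from \cite{Klemes} as a known result --- so there is nothing in the text to compare against; your argument, which runs the classical rising sun construction on the primitive $F(x)=\int_a^x(g(t)-\alpha)\,dt$ and then uses the stopping point $t_1=\sup\{t:F(t)\ge 0\}$ (where the hypothesis $g_{I_0}\le\alpha$, i.e.\ $F(b)\le 0$, enters) to repair the one exceptional component abutting the left endpoint, is essentially the standard derivation of this mean-value form of the sunrise lemma, and every step checks out. The only point you pass over quickly is the final merging of intervals that share endpoints, which is routine: a chain of such intervals, each with $\int(g-\alpha)=0$, has as its union an interval over which $\int(g-\alpha)=0$ by countable additivity, and distinct merged classes are genuinely disjoint.
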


Observe that the family $\{I_j\}$ in Lemma \ref{riesz} may be empty if $g(x)<\a$ a.e. on $I_0$. 

\begin{theorem}\label{main} Let $f\in BMO(I_0)$, and let $0<\ga<1$.
Then there is at most countable decreasing sequence of measurable sets $G_k\subset I_0$ such
that $|G_{k}|\le \min(2\ga^k,1)|I_0|$ and for a.e. $x\in I_0$,
\begin{equation}\label{ineq}
|f(x)-f_{I_0}|\le \frac{\|f\|_{*}}{2\ga}\sum_{k=0}^{\infty}\chi_{G_k}(x).
\end{equation}
\end{theorem}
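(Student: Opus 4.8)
The plan is to build the sets $G_k$ by iterating the Riesz sunrise lemma, applied simultaneously to $f$ and to $-f$, so as to control one-sided oscillations at a geometric sequence of scales. First I would normalize, assuming $\|f\|_*=1$, and set $\a_0 = 0$ (working with $g = f - f_{I_0}$, whose average over $I_0$ is $0$). At stage $k$, I expect to have a family of pairwise disjoint intervals on which the running "level" has increased by a fixed increment; the natural increment to aim for is $\frac{1}{2\ga}$, since the conclusion \eqref{ineq} says that $|f(x)-f_{I_0}|$ exceeds $\frac{m}{2\ga}$ only on $G_0\cap\cdots\cap G_{m-1}=G_{m-1}$ (the sets are decreasing, so the sum of characteristic functions at $x$ equals the largest index $m$ with $x\in G_{m-1}$, i.e.\ $x\in G_k$ for $k<m$). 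So $G_k$ should be (up to null sets) the set where $|f-f_{I_0}| > \frac{k+1}{2\ga}\cdot$something, realized as the union of the stage-$(k+1)$ intervals.

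The key mechanism: given an interval $J$ (at stage $k$) with $|g_J - c_k|$ controlled, apply Lemma \ref{riesz} to $g - c_k$ on $J$ at level $\a = \frac{1}{2\ga}$ (legitimate once $g_J - c_k \le \frac{1}{2\ga}$, which should hold by the inductive normalization) to extract intervals $I_j\subset J$ with $(g-c_k)_{I_j} = \frac{1}{2\ga}$ and $g - c_k \le \frac{1}{2\ga}$ outside $\cup I_j$; do the same with $-g$ to handle the lower tail. On each extracted interval $I_j$, the new center is $c_{k+1} = c_k + \frac{1}{2\ga}$ (or $c_k - \frac{1}{2\ga}$), and the crucial measure bound comes from the $BMO$ hypothesis: since $(g-c_k)_{I_j}=\frac{1}{2\ga}$ and, on the parent interval, $g - c_k \le \frac{1}{2\ga}$ outside $\cup I_j$ while the average of $g-c_k$ over the parent is $0$ (or whatever the inductive value is), a one-sided oscillation computation forces $\sum_j |I_j| \le \ga |J|$ — this is where the factor $\ga$ and the constant $\frac12$ in $|G_k|\le \min(2\ga^k,1)|I_0|$ enter, the "2" accounting for the two sides $\pm g$ at the first step. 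Iterating, $G_k$ = union over all stage-$(k+1)$ intervals, and the disjointness/nesting structure gives $|G_k|\le 2\ga^k|I_0|$, trivially capped at $|I_0|$.

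The pointwise inequality \eqref{ineq} should then fall out: for a.e.\ $x$, let $m=m(x)$ be the number of nested intervals in the construction containing $x$ (equivalently the number of sets $G_k$ containing $x$, using that the sets decrease); at the last such interval $J$ the sunrise lemma gives $|g(x) - c_J| \le \frac{1}{2\ga}$ with $|c_J| \le \frac{m}{2\ga}$ (the center moved by $\pm\frac{1}{2\ga}$ at most $m$ times, but more carefully, by a telescoping argument the contributions past the first step each move by $\frac{1}{2\ga}$ and one checks $|g(x)-g_{I_0}|\le \frac{m+1}{2\ga} = \frac{1}{2\ga}\sum_{k=0}^\infty \chi_{G_k}(x)$ once the indices line up). I would double-check the off-by-one bookkeeping so that $\chi_{G_k}$ is summed from $k=0$ and the count matches exactly.

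The main obstacle I anticipate is precisely getting the constants to match on the nose: proving $\sum_j |I_j| \le \ga |J|$ (rather than some weaker $C\ga$) requires using the sharp one-sided John--Nirenberg-type estimate that underlies Korenovskii's constant $2/e$ — i.e.\ that if the average of a function over $J$ is $0$ and it is $\le \frac{1}{2\ga}$ outside a set where its average is exactly $\frac{1}{2\ga}$, then that set has relative measure $\le \ga$; and simultaneously handling both tails $\pm g$ so that the two contributions together are $\le 2\ga$ at step $0$ but the geometric decay rate stays $\ga$ thereafter. A secondary subtlety is that the extracted intervals at different stages must nest properly (each stage-$(k+1)$ interval sits inside a stage-$k$ interval) so that the sets $G_k$ are genuinely decreasing; this should be automatic from applying the lemma within each parent interval, but I would state it carefully. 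Once the one-step estimate $\sum_j|I_j|\le\ga|J|$ is in hand with the right constant, the rest is a clean induction.
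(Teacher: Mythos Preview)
Your overall strategy---iterate the Riesz sunrise lemma at level $\a=\|f\|_*/2\ga$ and use the one-sided oscillation identity $\int_{\{f>f_J\}}(f-f_J)=\tfrac12\Omega(f;J)|J|$ to get $\sum_j|I_j|\le\ga|J|$---is exactly what the paper does. The gap is in the bookkeeping you flag as ``the main obstacle'': as written, you apply the sunrise lemma to \emph{both} $g-c_k$ and $-(g-c_k)$ inside every interval at every stage. Each application costs a factor $\ga$ in measure, so doing both gives $2\ga$ per stage and hence $|G_k|\le(2\ga)^k|I_0|$, not $2\ga^k|I_0|$. Your sentence ``the two contributions together are $\le 2\ga$ at step $0$ but the geometric decay rate stays $\ga$ thereafter'' is the right target, but nothing in the proposal explains why the rate should drop to $\ga$ after the first step.

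The paper's fix is to run the two one-sided iterations \emph{completely separately} rather than interleaving them. One bounds the positive part $(f-f_{I_0})\chi_{E(I_0)}$, where $E(I)=\{x\in I:f(x)>f_I\}$, via the self-reproducing inequality
\[
(f-f_{I_0})\chi_{E(I_0)}\le \frac{\|f\|_*}{2\ga}\chi_{E(I_0)}+\sum_j(f-f_{I_j})\chi_{E(I_j)},
\]
and then iterates only this (never touching $-g$). The point that makes the one-sided iteration close up is $E(I_j)\subset E(I_0)$, which holds because $f_{I_j}=f_{I_0}+\a>f_{I_0}$. This yields sets $E_k$ with $|E_k|\le\ga^k|I_0|$. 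The symmetric argument for $(f_{I_0}-f)\chi_{F(I_0)}$ gives $F_k$ with $|F_k|\le\ga^k|I_0|$, and since $E_k\subset E(I_0)$, $F_k\subset F(I_0)$ are disjoint, $G_k=E_k\cup F_k$ satisfies $|G_k|\le 2\ga^k|I_0|$. So the factor $2$ enters exactly once, at the end, not at every stage. With this organization your pointwise telescoping argument goes through verbatim.
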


\begin{proof} Given an interval $I\subseteq I_0$, set $E(I)=\{x\in I:f(x)>f_{I}\}$.
Let us show that there is at most countable family of pairwise disjoint
subintervals $I_j\subset I_0$ such that $\sum_j|I_j|\le \ga|I_0|$ and for a.e. $x\in I_0$,
\begin{equation}\label{claim}
(f-f_{I_0})\chi_{E(I_0)}\le \frac{\|f\|_{*}}{2\ga}\chi_{E(I_0)}+\sum_j(f-f_{I_j})\chi_{E(I_j)}.
\end{equation}

We apply Lemma \ref{riesz} with $g=f-f_{I_0}$ and $\a=\frac{\|f\|_{*}}{2\ga}$. One can assume that $\a>0$ and
the family of intervals $\{I_j\}$ from Lemma \ref{riesz} is non-empty (since otherwise (\ref{claim}) holds trivially
only with the first term on the right-hand side). Since $g_{I_j}=\a$, we obtain
\begin{eqnarray*}
\sum_j|I_j|=\frac{1}{\a}\int_{\cup_jI_j}(f-f_{I_0})dx&\le& \frac{1}{\a}\int_{\{x\in I_0:f(x)>f_{I_0}\}}(f-f_{I_0})dx\\
&=&\frac{1}{2\a}\O(f;I_0)|I_0|\le \ga|I_0|.
\end{eqnarray*}
Further, $f_{I_j}=f_{I_0}+\a$, and hence
$$
f-f_{I_0}=(f-f_{I_0})\chi_{I_0\setminus \cup_jI_j}+\a\chi_{\cup_jI_j}+\sum_j(f-f_{I_j})\chi_{I_j}.
$$
This proves (\ref{claim}) since $f-f_{I_0}\le \a$ a.e. on $I_0\setminus \cup_jI_j$.

The sum on the right-hand side of (\ref{claim}) consists of the terms of the same form as the left-hand side. Therefore, one can proceed iterating~(\ref{claim}). Denote $I_j^1=I_j$, and let $I_j^k$ be the intervals obtained
after the $k$-th step of the process. Iterating (\ref{claim}) $m$ times yields
$$
(f-f_{I_0})\chi_{E(I_0)}\le \frac{\|f\|_{*}}{2\ga}\sum_{k=0}^{m}\sum_j
\chi_{E(I_j^k)}(x)+\sum_i(f-f_{I_i^{m+1}})\chi_{E(I_i^{m+1})}
$$
(where $I_j^0=I_0$). If there is $m$ such that for any $i$ each term of the second sum is bounded trivially by $\frac{\|f\|_{*}}{2\ga}\chi_{E(I_i^{m+1})}$,
we stop the process, and we would obtain the finite sum with respect to $k$. Otherwise, let $m\to\infty$. Using that
$$|\cup_{i}I_i^{m+1}|\le \ga|\cup_lI_l^{m}|\le\dots\le \ga^{m+1}|I_0|,$$
we get that the support of the second term will tend to a null set. Hence, setting $E_k=\cup_{j}E(I_j^k)$, for a.e  $x\in E(I_0)$ we obtain
\begin{equation}\label{eq1}
(f-f_{I_0})\chi_{E(I_0)}\le \frac{\|f\|_{*}}{2\ga}\Big(\chi_{E(I_0)}(x)+\sum_{k=1}^{\infty}\chi_{E_k}(x)\Big).
\end{equation}
Observe that $E(I_j)=\{x\in I_j:f(x)>f_{I_0}+\a\}\subset E(I_0)$. From this and from the above process we easily get that
$E_{k+1}\subset E_k$. Also, $E_k\subset \cup_jI_j^k$, and hence $|E_k|\le \ga^k|I_0|$.

Setting now $F(I)=\{x\in I:f(x)\le f_I\}$, and applying the same argument to $(f_{I_0}-f)\chi_{F(I)},$ we obtain
\begin{equation}\label{eq2}
(f_{I_0}-f)\chi_{F(I_0)}\le \frac{\|f\|_{*}}{2\ga}\Big(\chi_{F(I_0)}(x)+\sum_{k=1}^{\infty}\chi_{F_k}(x)\Big),
\end{equation}
where $F_{k+1}\subset F_k$ and $|F_k|\le \ga^k|I_0|$. Also, $F_k\cap E_k=\emptyset$. Therefore, summing (\ref{eq1}) and (\ref{eq2}) and
setting $G_0=I_0$ and $G_k=E_k\cup F_k, k\ge 1$, we get (\ref{ineq}).
\end{proof}

\begin{proof}[Proof of Theorem \ref{mr}] Let us show first that the best possible $C_1$ in 
(\ref{jnr}) satisfies $C_1\ge\frac{1}{2}e^{4/e}$. It suffices to give an example of $f$ on $I_0$ such that for any $\e>0$,
\begin{equation}\label{exa}
|\{x\in I_0:|f(x)-f_{I_0}|>2(1-\e)\|f\|_{*}\}|=|I_0|/2.
\end{equation}

Let $I_0=[0,1]$ and take $f=\chi_{[0,1/4]}-\chi_{[3/4,1]}$. Then $f_{I_0}=0$. Hence, (\ref{exa}) would follow from $\|f\|_{*}=1/2$. 
To show the latter fact, take an arbitrary $I\subset I_0$. It is easy to see that computations reduce to the following cases: $I$ contains only 
$1/4$ and $I$ contains both $1/4$ and $3/4$. 

Assume that $I=(a,b), 1/4\in I,$ and $b<3/4$. Let $\a=\frac{1}{4}-a$ and $\b=b-\frac{1}{4}$. Then
$f_I=\a/(\a+\b)$ and 
$$\O(f;I)=\frac{2}{\a+\b}\int_{\{x\in I:f>f_I\}}(f-f_I)=\frac{2\a\b}{(\a+\b)^2}\le 1/2$$
with $\O(f;I)=1/2$ if $\a=\b$. 

Consider the second case. Let $I=(a,b), a<1/4$ and $b>3/4$. Let $\a$ be as above and $\b=b-\frac{3}{4}$. Then 
$$\O(f;I)=\frac{2}{\a+\b+1/2}\int_{\{x\in I:f>f_I\}}(f-f_I)=\frac{4\a(4\b+1)}{(2\a+2\b+1)^2}.$$
Since 
$$\sup_{0\le\a,\b\le1/4}\frac{4\a(4\b+1)}{(2\a+2\b+1)^2}=1/2,$$
this proves that $\|f\|_{*}=1/2.$ Therefore, $C_1\ge\frac{1}{2}e^{4/e}$. Let us show now the converse inequality. 

Let $f\in BMO(I_0)$. Setting $\psi(x)=\sum_{k=0}^{\infty}\chi_{G_k}(x),$ where $G_k$ are from Theorem \ref{main}, we have
\begin{eqnarray*}
|\{x\in I_0:\psi(x)>\a\}|&=&\sum_{k=0}^{\infty}|G_k|\chi_{[k,k+1)}(\a)\\
&\le& |I_0|\sum_{k=0}^{\infty}\min(1,2\ga^k)\chi_{[k,k+1)}(\a).
\end{eqnarray*}
Hence, by (\ref{ineq}),
\begin{eqnarray*}
|\{x\in I_0:|f(x)-f_{I_0}|>\a\}|&\le& |\{x\in I_0:\psi(x)>2\ga\a/\|f\|_{*}\}|\\
&\le& |I_0|\sum_{k=0}^{\infty}\min(2\ga^k,1)\chi_{[k,k+1)}(2\ga\a/\|f\|_{*}).
\end{eqnarray*}
This estimate holds for any $0<\ga<1$. Therefore, taking here the infimum over $0<\ga<1$, we obtain
$$
|\{x\in I_0:|f(x)-f_{I_0}|>\a\}|\le \f\Big(\frac{2/e}{\|f\|_{*}}\a\Big)|I_0|,
$$
where
$$\f(\xi)=\inf_{0<\ga<1}\sum_{k=0}^{\infty}\min(2\ga^k,1)\chi_{[k,k+1)}(\ga e\xi).$$
Thus, the theorem would follow from the following estimate:
\begin{equation}\label{mp}
\f(\xi)\le \frac{1}{2}e^{\frac{4}{e}-\xi}\quad (\xi>0).
\end{equation}

It is easy to see that $\f(\xi)=1$ for $0<\xi\le 2/e$, and in this case (\ref{mp}) holds trivially. 
Next, $\f(\xi)=\frac{2}{e\xi}$ for $2/e\le\xi\le 4/e$. Using that the function $e^{\xi}/\xi$ is increasing on $(1,\infty)$
and decreasing on $(0,1)$, we get $\displaystyle\max_{\xi\in[2/e,4/e]}2e^{\xi}/e\xi=\frac{1}{2}e^{4/e},$ verifying (\ref{mp}) for 
$2/e\le\xi\le 4/e$. 
 
For $\xi\ge 1$ we estimate $\f(\xi)$ as follows. Let $\xi\in [m,m+1),m\in {\mathbb N}$.
Taking $\ga_i=i/e\xi$ for $i=m$ and $i=m+1$, we get
\begin{eqnarray}
\f(\xi)&\le& 2\min\Big(\Big(\frac{m}{e\xi}\Big)^m,\Big(\frac{m+1}{e\xi}\Big)^{m+1}\Big)\nonumber\\
&=&2\Big(\Big(\frac{m}{e\xi}\Big)^m\chi_{[m,\xi_m]}(\xi)+\Big(\frac{m+1}{e\xi}\Big)^{m+1}\chi_{[\xi_m,m+1)}(\xi)\Big),\label{inm}
\end{eqnarray}
where
$\xi_m=\frac{1}{e}\frac{(m+1)^{m+1}}{m^m}.$ Using that the function $e^{\xi}/\xi^m$ is increasing on $(m,\infty)$
and decreasing on $(0,m)$, by (\ref{inm}) we obtain that for $\xi\in [m,m+1)$,
\begin{eqnarray*}
\f(\xi)e^{\xi}&\le& 2\Big(\frac{m}{e\xi_m}\Big)^me^{\xi_m}=2\left(\frac{e^{\frac{1}{e}(1+1/m)^m}}{(1+1/m)^m}\right)^{m+1}\equiv c_m. 
\end{eqnarray*}

Let us show now that the sequence $\{c_m\}$ is decreasing. This would finish the proof since $c_1=\frac{1}{2}e^{4/e}$. 
Let $\eta(x)=(1+1/x)^x$ and $\nu(x)=(e^{\eta(x)/e}/\eta(x))^{x+1}.$
Then $c_m=2\nu(m)$ and hence it suffices to show that $\nu'(x)<0$ for $x\ge 1$. We have
$$\nu'(x)=\nu(x)\Big(\log\frac{e}{\eta(x)}-(1-\eta(x)/e)\log(1+1/x)^{1+x}\Big).$$

Since $\eta(x)(1+1/x)>e$, we get $\mu(x)=\frac{\eta(x)}{e-\eta(x)}>x$. From this and from the fact that the function 
$(1+1/x)^{1+x}$ is decreasing we obtain
$$
(e/\eta(x))^{\frac{1}{1-\eta(x)/e}}=(1+1/\mu(x))^{1+\mu(x)}<(1+1/x)^{1+x},
$$
which is equivalent to that $\nu'(x)<0$. 
\end{proof}

\end{document}